\newtheorem{theorem}{Theorem}
\newtheorem{lemma}[theorem]{Lemma}
\theoremstyle{definition}
\newtheorem{definition}[theorem]{Definition}
\setlist[itemize]{leftmargin=1cm}
\setlist[enumerate]{leftmargin=1cm}
\renewcommand{\leq}{\leqslant}
\renewcommand{\geq}{\geqslant}
\renewcommand{\to}{\rightarrow}
\let\eps\varepsilon
\def\E{\mathbb{E}}
\def\N{\mathbb{N}}
\def\P{\mathbb{P}}
\def\B{\mathcal{B}}
\def\C{\mathcal{C}}
\def\G{\mathcal{G}}
\def\S{\mathcal{S}}
\def\<{\langle}
\def\>{\rangle}
\def\bin{\mathrm{Bin}}
\def\er{Erd\H{o}s--R\'enyi}
\def\cc{\mathrm{c}}
\def\biggiven{\;\big|\;}
\def\Biggiven{\;\Big|\;}
\title{The threshold for jigsaw percolation on random graphs}
\author{B\'ela Bollob\'as \and Oliver Riordan \and Erik Slivken \and Paul Smith}
\address{Department of Pure Mathematics and Mathematical Statistics, Wilberforce Road, Cambridge CB3 0WA, UK, and Department of Mathematical Sciences, University of Memphis, Memphis, TN 38152, USA, and London Institute for Mathematical Sciences, 35a South Street, London W1K 2XF, UK}
\email{b.bollobas@dpmms.cam.ac.uk}
\address{Mathematical Institute, University of Oxford, Radcliffe Observatory Quarter, Woodstock Road, Oxford OX2 6GG, UK}
\email{riordan@maths.ox.ac.uk}
\address{Department of Mathematics, UC Davis, One Shields Avenue, Davis, CA 95616, USA}
\email{erikslivken@math.ucdavis.edu}
\address{Department of Pure Mathematics and Mathematical Statistics, Wilberforce Road, Cambridge CB3 0WA, UK}
\email{p.j.smith@dpmms.cam.ac.uk}
\thanks{B.B.\ is partially supported by NSF grant DMS~1301614 and MULTIPLEX grant no.~317532}
\date{\today}
\subjclass[2010]{Primary 05C80; Secondary 60C05}
\keywords{jigsaw percolation, random graphs}
\begin{document}

\begin{abstract}
Jigsaw percolation is a model for the process of solving puzzles within a social network, which was recently proposed by Brummitt, Chatterjee, Dey and Sivakoff. In the model there are two graphs on a single vertex set (the `people' graph and the `puzzle' graph), and vertices merge to form components if they are joined by an edge of each graph. These components then merge to form larger components if again there is an edge of each graph joining them, and so on. Percolation is said to occur if the process terminates with a single component containing every vertex. In this note we determine the threshold for percolation up to a constant factor, in the case where both graphs are \er~ random graphs.
\end{abstract}

\maketitle

\section{Introduction}

Jigsaw percolation is a dynamical percolation model on finite graphs, which was proposed by Brummitt, Chatterjee, Dey and Sivakoff~\cite{BCDS} as a tool for the study of sequences of interactions within a social network that enable a group of individuals to collectively solve a problem. In the model there are two edge sets defined on a common set of vertices, and, at discrete times, clusters of vertices merge to form larger clusters if they are joined by at least one edge of each type. Before expanding on the motivation for the model, let us give the formal definition. We write $[n]$ for $\{1,2,\ldots,n\}$.

\begin{definition}\label{def:jigsaw}
For $i=1,2$, let $E_i\subset[n]^{(2)}$ be a set of pairs of elements of $V:=[n]$. Let $G$ be the ordered triple $G:=(V,E_1,E_2)$; we call this object a \emph{double graph}. \emph{Jigsaw percolation} with input $G$ evolves at discrete times $t=0,1,\dots$ according to the following algorithm. At time $t$ there is a partition $\C_t=\{C_t^1,\dots,C_t^{k_t}\}$ of the vertex set $[n]$, which is constructed inductively as follows:
\begin{enumerate}
\item We take $k_0=n$ and $C_0^i = \{i\}$ for all $1\leq i\leq n$. That is, at time 0 we begin with every vertex in a separate set of the partition.
\item At time $t\geq 0$, construct a graph $\G_t$ on vertex set $\C_t$ by joining $C_t^i$ to $C_t^j$ if there exist edges $e_1\in E_1$ and $e_2\in E_2$ such that
\[
e_\ell \cap C_t^k \neq \emptyset
\]
for each of the four choices of $\ell\in\{1,2\}$ and $k\in\{i,j\}$.
\item If $E(\G_t)=\emptyset$, then STOP. Otherwise, construct the partition
\[
\C_{t+1}=\{C_{t+1}^1,\dots,C_{t+1}^{k_{t+1}}\}
\]
corresponding to the connected components of $\G_t$, so each part $C_{t+1}^i$ is a union of those parts of $\C_t$ corresponding to a component of $\G_t$.
\item If $|\C_{t+1}|=1$ then STOP. Otherwise, go to step 2.
\end{enumerate}
Since $|\C_t|$ is strictly decreasing, the algorithm terminates in time at most $\binom{n}{2}$. We denote the final partition by $\C_\infty=(C_\infty^1,\dots,C_\infty^{k_\infty})$. We say that there is \emph{percolation}, or that the double graph is \emph{solved}, if $\C_\infty=\{V\}$, i.e., if we stop in step (4).
\end{definition}

Less formally, the jigsaw percolation algorithm begins with each vertex considered a separate cluster, and proceeds by merging, at each step, clusters of vertices joined by at least one edge from $E_1$ and at least one edge from $E_2$.

Let us mention in passing a superficially similar, but very different, percolation model in double graphs introduced by Buldyrev, Parshani, Paul, Stanley and Havlin~\cite{BPRSH} in 2010. The set-up is the same, but one defines the partition in a top-down
way, finding the maximal sets of vertices connected in both graphs $(V,E_1)$ and $(V,E_2)$ (`mutually connected clusters'). To see the difference note that if these graphs are edge-disjoint and connected, then in this model $V$ forms a single `mutually connected cluster', whereas in jigsaw percolation the algorithm stops where it starts, with a partition into singletons.

Returning to the jigsaw model, which we consider throughout this paper, Brummitt, Chatterjee, Dey and Sivakoff~\cite{BCDS} suggest that jigsaw percolation may be a suitable model for analysing how a puzzle may be solved by collaboration between individuals in a social network. The premise is that each individual has a `piece' of the puzzle, and that these `pieces' must be combined in a certain way in order to solve the puzzle. The process of solving the puzzle is constrained by the social network of the individuals concerned. To model this, the authors of~\cite{BCDS} suggest that one of the graphs, $G_1:=(V,E_1)$ say, (which they call the \emph{people graph}, and which we call the \emph{red graph}), could represent the graph of acquaintances, and that the other graph, $G_2:=(V,E_2)$, (which they call the \emph{puzzle graph}, and which we call the \emph{blue graph}), could represent the `compatibility' between pairs of `pieces' of the puzzle. The jigsaw percolation algorithm thus represents the merging of `compatible puzzle pieces' by groups of connected individuals. For an in-depth account of the applications of the model to social networks, we refer the reader to the original article~\cite{BCDS}.

In~\cite{BCDS}, the authors prove a number of necessary and sufficient conditions for percolation when the red graph $G_1$ is the \er~ random graph $\G(n,p)$ with $n$ vertices and edge probability $p$ and the blue graph $G_2$ is a deterministic graph, such as the $n$-cycle or another graph of bounded maximum degree. For example, they show that there is an absolute constant $c>0$ such that if, for each $n\in\N$, $G_2^n=([n],E_2^n)$ is an arbitrary connected graph on vertex set $[n]$, and $G_1^n$ is an \er~ graph with edge probability $p\geq c/\log n$, then the double graph $G:=([n],E_1^n,E_2^n)$ percolates with high probability as $n\to\infty$. On the other hand, if the graphs $G_2^n$ have bounded maximum degree and instead $p<n^{-\eps}$ for some $\eps>0$, then with high probability the double graph does not percolate.

Gravner and Sivakoff~\cite{GS} observe that for certain deterministic graphs $G_2$, the jigsaw percolation model behaves similarly to bootstrap percolation on the grid $[n]^2$, and they use techniques from bootstrap percolation to prove tight bounds in certain cases. For example, if $G_1=\G(n,p)$ and $G_2=C_n$ is the $n$-cycle, they show that the \emph{critical probability} $p_\cc^{C_n}(n)$ for the corresponding double graph $G$, defined by
\[
p_\cc^{C_n}(n) := \inf \big\{ p \,:\, \P(G \text{ percolates}) \geq 1/2 \big\},
\]
satisfies
\[
p_\cc^{C_n} = \frac{(1+o(1))\pi^2/6}{\log n}.
\]
(This critical probability, and specifically the constant $\pi^2/6$, will be known to readers who are familiar with bootstrap percolation: it is also (see~\cite{Hol}) the critical probability for the so-called `modified' bootstrap percolation model on $[n]^2$ -- this is, of course, not a coincidence (see~\cite{GS} for the details).)

In this note we study the case where \emph{both} underlying graphs are \er~ random graphs.
In order to state our result, we need a little more notation. For the rest of the paper, we shall take $G_1=(V,E_1)$ and $G_2=(V,E_2)$ to be independent \er~ random graphs with the same vertex set $V=[n]$, with edge probabilities $p_1$ and $p_2$ respectively, and we take $G=([n],E_1,E_2)$.
A first trivial observation is that if the double graph $G$ is to percolate then both $G_1$ and $G_2$ must be connected. We shall ensure this by assuming
\begin{equation}\label{eq:conn}
\min\{ p_1,p_2 \} \geq \frac{c\log n}{n},
\end{equation}
for a sufficiently large absolute constant $c$.\footnote{With some tightening, our arguments could be made to work under the (optimal) assumption that $\min\{ p_1, p_2 \} \geq (1 + \eps)\log n / n$, where $\epsilon > 0$ is fixed but arbitrary. We choose to make the stronger assumption in~\eqref{eq:conn} for clarity of the exposition.} Under this condition, in this note we determine the critical value $p_\cc(n)$ of the product $p_1 p_2$ up to a constant factor. More precisely, we show that under the assumption~\eqref{eq:conn}, if $p_1 p_2\leq (1/c)p_\cc(n)$ then percolation is very unlikely, and if $p_1 p_2 \geq c p_\cc(n)$ then percolation is very likely.

\begin{theorem}\label{thm:pc}
There is an absolute constant $c>0$ such that the following holds, with $G=(V,E_1,E_2)$, where $(V,E_1)=\G(n,p_1)$ and $(V,E_2)=\G(n,p_2)$ are independent \er\ random graphs on the same set of $n$ vertices, and $p_1$ and $p_2$ are functions of $n$.
\begin{enumerate}
\item[(i)] If $p_1p_2\leq 1/(cn\log n)$, then $\P(G \text{ percolates})\to 0$.
\item[(ii)] If $p_1p_2\geq c/(n\log n)$ and \eqref{eq:conn} holds, then $\P(G \text{ percolates})\to 1$.
\end{enumerate}
\end{theorem}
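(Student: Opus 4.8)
\emph{Part (i): the non-percolation bound.} The plan is a first-moment argument. The key reduction is that if $G$ percolates then there exist spanning trees $T_1\subseteq E_1$ and $T_2\subseteq E_2$ for which the double graph $(V,T_1,T_2)$ already percolates. To obtain these, one runs the process on $G$ and, at each time $t$, selects a spanning forest of the auxiliary graph $\G_t$; each selected edge, joining clusters $C$ and $C'$, is certified by some edge of $E_1$ and some edge of $E_2$, each joining $C$ to $C'$. Letting $F_1\subseteq E_1$ and $F_2\subseteq E_2$ collect these certificates, one has $|F_i|\leq n-1$, and $(V,F_1,F_2)$ still contains enough structure to reproduce the run, so it percolates; in particular $F_1$ and $F_2$ are connected, hence (having at most $n-1$ edges) are spanning trees. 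Writing $N_n$ for the number of ordered pairs of spanning trees $(T_1,T_2)$ on $[n]$ such that $(V,T_1,T_2)$ percolates, a union bound then gives $\P(G\text{ percolates})\leq (p_1p_2)^{n-1}N_n$.

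\emph{Part (i), continued.} It therefore suffices to establish a bound of the form $N_n\leq (Cn\log n)^n$ for an absolute constant $C$: then $(p_1p_2)^{n-1}N_n\leq Cn\log n\cdot(C/c)^{n-1}\to 0$ whenever $c>C$, which is all (i) requires (note no lower bound on $p_1$ or $p_2$ is used here). I would prove the bound on $N_n$ by induction on $n$, using the first step of the process. Since $T_1\cap T_2$ must be non-empty (otherwise $\G_0$ has no edges), it is a spanning forest; writing its parts as $P_1,\dots,P_k$ one has $k\leq n-1$ (with $k=1$ only if $T_1=T_2$, a case contributing just $n^{n-2}$), the restrictions of $T_1$ and of $T_2$ to each $P_i$ are spanning trees of $P_i$, the remaining ``crossing'' edges of $T_1$ and of $T_2$ project to spanning trees on the $k$ contracted vertices, and the contracted double graph must again percolate. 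Turning this into a weighted recursion — keeping track of the $\prod_i |P_i|^{|P_i|-2}$ choices of the within-part trees and of the number of ways to lift each contracted edge back to an actual edge — should give the claimed estimate. Carrying out this recursion so that the power of $\log n$ comes out as exactly $1$, which is what the statement forces, is the technical core of this direction.

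\emph{Part (ii): percolation.} I would argue in three stages. \emph{Seeding:} because $np_1p_2\geq c/\log n$, the random graph $(V,E_1\cap E_2)=\G(n,p_1p_2)$ contains, with high probability, a connected component $D$ with $|D|$ of order $\log n/\log\log n$ --- a standard first- and second-moment estimate for tree components in a sparse random graph --- and after the first jigsaw step $D$ forms a single cluster. \emph{Coalescence:} one must show that with high probability the process produces a cluster of size at least $n/2$. The naive approach of following a single cluster fails: a cluster of size $m$ absorbs roughly $nm^2p_1p_2\approx m^2c/\log n$ vertices in one step, which is \emph{less} than $m$ while $m<(\log n)/c$; since the seeds are well below $(\log n)/c$, no single cluster can cross that barrier merely by accreting vertices, and one must use the coalescence of many clusters with each other. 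The correct argument therefore tracks the full cluster-size distribution through polylogarithmically many rounds, scale by scale, until some cluster exceeds size $(\log n)/c$, after which it doubles rapidly and, once $mp_1,mp_2\gtrsim 1$, absorbs a constant fraction of the remaining vertices at each further step. Two points need care: the rounds are not independent, since edges already examined cannot be re-used; and the estimates must hold uniformly over the admissible range of $(p_1,p_2)$ (from the balanced case $p_1=p_2=\sqrt{c/(n\log n)}$ to the extreme $p_1=c\log n/n$). \emph{Absorption:} once some cluster has size at least $n/2$, a vertex $v$ fails to be merged into it at the next step only if it sends no $E_1$-edge, or no $E_2$-edge, into it; by \eqref{eq:conn} this has probability at most $(1-p_1)^{n/2}+(1-p_2)^{n/2}\leq 2n^{-c/2}$, and a union bound over the $n$ vertices finishes the proof once $c>2$.

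\emph{Where the difficulty lies.} The witness reduction and the absorption step are routine; the real work is the combinatorial bound $N_n\leq (Cn\log n)^n$ in (i) and, above all, the coalescence estimate in (ii). The point of the latter is that one cannot reduce to a single growing droplet, and it is a sufficiently sharp analysis of the simultaneous growth and merging of all the clusters that removes the spurious $\log\log n$ of earlier work and pins the critical value of $p_1p_2$ at order $1/(n\log n)$.
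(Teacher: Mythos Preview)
Your proposal has genuine gaps in both directions, and in each case the paper takes a shorter route that you have missed.

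\textbf{Part (i).} Your reduction to spanning-tree pairs is valid, but the bound $N_n\leq (Cn\log n)^n$ is left unproved, and the recursion you sketch is not obviously going to produce exactly one power of $\log n$. The paper avoids this entirely via the Aizenman--Lebowitz trick: if $G$ percolates, then by merging clusters two at a time one sees that there must exist a set $A$ with $\log n\leq |A|\leq 2\log n$ such that both $G_1[A]$ and $G_2[A]$ are connected. The expected number of such sets is at most
\[
\sum_{k=\log n}^{2\log n}\binom{n}{k}\,(k^{k-2})^2\,(p_1p_2)^{k-1}
\;\leq\;\frac{1}{p_1p_2}\sum_{k=\log n}^{2\log n}(enk\,p_1p_2)^k,
\]
and for $p_1p_2\leq 1/(cn\log n)$ with $c$ large this is $o(1)$. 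No recursion on $N_n$ is needed.

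\textbf{Part (ii).} Your absorption step is fine, but the coalescence plan --- ``track the full cluster-size distribution through polylogarithmically many rounds, scale by scale'' --- is a description of an intention, not an argument; making it rigorous (controlling dependence between rounds, showing that enough clusters of each scale coexist and actually meet) is at least as hard as the theorem. The paper does something quite different and never analyses the joint evolution of many clusters. It runs a \emph{1-by-1 algorithm}: starting from a single vertex, at step $t$ (trial set of size $t$) it seeks one new vertex with a red edge to the most recently added vertex and a blue edge to anywhere in the trial set. The conditional success probability at step $t$ is essentially $np_1p_2t=ct/\log n$, so a single attempt reaches size $t_0=(\log n)/c$ with probability about
\[
\prod_{t=1}^{t_0}\frac{ct}{\log n}\;\approx\;\left(\frac{ct_0}{e\log n}\right)^{t_0}=e^{-O(t_0)}=n^{-O(1)/c},
\]
and beyond $t_0$ each step succeeds with probability $1-o(1)$, so the attempt continues to size $(\log n)^{3/2}$ with probability still $n^{-O(1)/c}$. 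By discarding only the (small) trial set after a failure, successive attempts are independent and one gets $\Theta\bigl(n/(\log n)^{3/2}\bigr)$ of them; the probability that all fail is at most $(1-n^{-O(1)/c})^{n/(2(\log n)^{3/2})}=o(1)$ for $c$ large. From a seed of size $(\log n)^{3/2}$ a simple doubling argument (using a fresh sprinkle) reaches size $n/16$, and a final sprinkle absorbs the rest. The key idea you are missing is to convert the subcritical regime $m<(\log n)/c$ into a small but explicit success probability for a \emph{single} exploration, and then amortise over many disjoint tries, rather than attempting to follow all clusters at once.
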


Informally, this result says that
\[
p_\cc(n) = \Theta\left(\frac{1}{n\log n}\right).
\]

If $p_2$ satisfying~\eqref{eq:conn} is given, then another way of thinking about Theorem~\ref{thm:pc} is that the critical value of $p_1$ is $\Theta(1 / p_2 n \log n) = \Theta(1 / d \log n)$ (provided this also satisfies~\eqref{eq:conn}), where $d = (n - 1)p_2 $ is the expected degree of vertices in $G_2$. Theorems~1 and~2 of~\cite{GS} also show that, for certain families of (deterministic) $d$-regular graphs $G_2$ (with $d$ possibly being a function of $n$), the critical value of $p_1$ is $\Theta(1 / d \log n)$, although the authors of~\cite{GS} also show that this does not always hold.

The proof of lower bound in Theorem~\ref{thm:pc} is straightforward and follows from standard methods; the real content of this paper is the proof of the upper bound.

\section{Proof of part (i) of Theorem~\ref{thm:pc}}

Here we present the very brief proof of part (i) of Theorem~\ref{thm:pc}, although really it is no more than the argument used by Aizenman and Lebowitz~\cite{AL} to derive the lower bound up to a constant factor for the critical probability for 2-neighbour bootstrap percolation on $[n]^2$.

The percolation process defined above can be broken down into smaller steps, in each of which two clusters (parts of the current partition) merge -- specifically, one can modify step (3) of Definition~\ref{def:jigsaw} to merge an arbitrary pair of sets $C_t^i$ joined in the graph $\G_t$, rather than entire connected components. Since we start with a partition into singletons, considering the first step at which a cluster of size at least $\log n$ appears, it follows that if $(G,E_1,E_2)$ percolates, then there is some set $A$ of at least $\log n$ but at most $2\log n$ vertices such that the red and blue graphs restricted to $A$ are both connected. Using independence of the red and blue graphs, the facts that a connected graph must contain a spanning tree and that there are $k^{k-2}$ labelled trees on $k$ vertices, and the bound $\binom{n}{k}\leq (en/k)^k$, we see that
\begin{eqnarray*}
 \P(G \text{ percolates}) &\leq& \sum_{k=\log n}^{2\log n} \binom{n}{k} k^{k-2} p_1^{k-1} k^{k-2} p_2^{k-1} \\
 &\leq& \frac{1}{p_1p_2} \sum_{k=\log n}^{2\log n} \left( e n k p_1 p_2 \right)^k \\
 &\leq& \frac{1}{p_1p_2} \sum_{k=\log n}^{2\log n} \left( 2 e n p_1 p_2 \log n \right)^k \\
 &\leq& 2 e n \log n \sum_{k=\log n}^\infty \left( 2 e n p_1 p_2 \log n \right)^{k-1}.
\end{eqnarray*}
For $p_1p_2\leq  1/(e^4 n\log n)$, say, the quantity in brackets is at most $1/e^2$ and it follows that the final bound is $o(1)$, proving (i).

We can now move on to the main part of this paper: the second part of Theorem~\ref{thm:pc}.

\section{Proof of part (ii) of Theorem~\ref{thm:pc}}

Let us begin with a small number of conventions. As already mentioned, $G_1$ and $G_2$ will always be independent \er~ random graphs on vertex set $V:=[n]$, with densities $p_1$ and $p_2$ respectively. Throughout, we assume that $c$ is a sufficiently large absolute constant, that the number of vertices $n$ is sufficiently large, and that $p_1$ and $p_2$ satisfy
\begin{equation}\label{eq:conds}
p_1 p_2 = \frac{c}{n \log n} \qquad \text{and} \qquad \frac{c\log n}{n} \leq p_1 \leq p_2.
\end{equation}
The assumptions of Theorem~\ref{thm:pc}~(ii) require $p_1 p_2 \geq c/(n\log n)$ rather than the equality in \eqref{eq:conds}, but we may couple with smaller $p_1$ and $p_2$ if necessary so that~\eqref{eq:conds} holds. Constants implicit in $O(\cdot)$ notation (and its variants) are independent of $c$ (and of $n$).
For later use, let us note some immediate consequences of \eqref{eq:conds};
these follow since $p_1\leq \sqrt{p_1p_2}$ and $p_2=(p_1p_2)/p_1$:
\begin{equation}\label{eq:conds2}
 p_1 \leq \left(\frac{c}{n\log n}\right)^{1/2} \leq n^{-1/2} \qquad \text{and} \qquad p_2 \leq \frac{1}{(\log n)^2}.
\end{equation}

We need a key definition: that of an `internally spanned' set of vertices. The definition enables one to say which sets of sites (internally) percolate, without any help from other vertices, and is motivated by several similar notions in the bootstrap percolation literature (see, for example,~\cite{AL,CC}). Our method for showing that $G$ percolates will broadly take the form `there exists a nested sequence of internally spanned sets $U_1\subset\dots\subset U_m$, with $U_m = [n]$'; the crux will be finding such a sequence.

\begin{definition}
A set $U\subset[n]$ is \emph{internally spanned} by $G$ if the double graph $G^U:=(U,E_1^U,E_2^U)$ percolates, where $E_i^U$ is the edge set of the induced subgraph of $G_i$ on vertex set $U$, for $i=1,2$ (that is, $E_i^U:=E\big(G_i[U]\big)$). We write $I(G,m)$ for the event that $V(G)$ contains an internally spanned set of size at least $m$.
\end{definition}

The proof of the lower bound of Theorem~\ref{thm:pc} could now be rephrased as follows. First, observe that if $G$ percolates, then, by merging components two at a time, we have that $V(G)$ must contain an internally spanned set of size roughly $\log n$. Second, using well-known properties of trees, one can show that if $p$ is small then this event is unlikely to occur.

The proof of the upper bound is divided into three parts, with a corresponding division of both the red and blue edges into three subsets. In the first part of the proof we show that with high probability there is a set $A$ of at least $(\log n)^{3/2}$ vertices which is internally spanned by the first set of (red and blue) edges. This is the core of the proof: the `bottleneck' event to percolation (in a certain sense) is the existence of an internally spanned set of size about $\log n$.\footnote{This observation was also exploited in the proof of the lower bound, although it is nothing new: a similar idea was used in~\cite{BCDS,GS} on jigsaw percolation, and previously in~\cite{AL,Hol} (among many other papers) on bootstrap percolation.} Then we show, using the second set of edges, that with high probability the set $A$ is contained in an internally spanned (with respect to the edges reveal so far) set $B$ of size $n/16$. Finally, using the condition~\eqref{eq:conn}, we show using the third set of edges and the set $B$ that with high probability the whole vertex set is internally spanned.

In each of the first two parts we specify an `exploration algorithm', in which the edges of each of the underlying graphs are revealed in an order that depends on what has been observed so far. The purpose is to reveal as few edges as possible (in order that we may reveal them later if necessary) in the search for a nested sequence of internally spanned sets. The algorithms are set out explicitly in Definitions~\ref{def:alg1} and~\ref{def:alg2}.

Between the three parts of the proof, independence is maintained by \emph{sprinkling}: for each $i=1,2$ and $j=1,2,3$, we take $G_i^{(j)}$ to be an independent copy of $\G(n,p_i)$, where $p_1$ and $p_2$ satisfy the conditions \eqref{eq:conds} as before; we then set $E_i^{(j)}:=E(G_i^{(j)})$,
\[
G_i := G_i^{(1)} \cup G_i^{(2)} \cup G_i^{(3)}, \qquad \text{and} \qquad G^{(j)} := \big([n],E_1^{(j)},E_2^{(j)}\big).
\]
Constructing the $G_i$ in this way maintains both conditions in \eqref{eq:conds}, with a different value of $c$. More precisely, the edge probability $p_i'$ of $G_i$ satsfies $1-p_i' = (1-p_i)^3$, so $p_i'$ is a little less than $3p_i$. One could therefore replace $c$ by $9c$ in~\eqref{eq:conds}, and any double graph satisfying the new conditions could be coupled with our double graph.

\subsection{Part I}

In this first part of the proof we prove the following lemma.

\begin{lemma}\label{lem:part1}
The probability that $G^{(1)}$ contains an internally spanned set of size at least $(\log n)^{3/2}$ is at least
$1-e^{-\sqrt{n}}$.
\end{lemma}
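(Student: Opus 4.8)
The plan is to find an internally spanned set of size at least $(\log n)^{3/2}$ by a two-phase growth argument, using the full (sprinkled) randomness of $G^{(1)}$ but revealing edges in a carefully controlled order via an exploration algorithm (this is Definition \ref{def:alg1}). Phase one: show that with probability bounded away from zero there is a single "seed" set $A_0$ of size about $\log n$ (say between $\log n$ and $2\log n$) that is internally spanned. Phase two: starting from such a seed, grow it cluster-by-cluster to size $(\log n)^{3/2}$, showing that each growth step succeeds with high enough probability that the whole chain of $\asymp (\log n)^{3/2}$ steps goes through; then amplify the "probability bounded away from zero" to "probability at least $1-e^{-\sqrt n}$" by running many essentially independent attempts on disjoint vertex batches.

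\textbf{Producing a seed.} For the seed I would look for a set $S$ of $k\asymp\log n$ vertices such that both $G_1^{(1)}[S]$ and $G_2^{(1)}[S]$ contain a spanning tree — this certainly forces $S$ to be internally spanned, since if both induced graphs are connected the jigsaw process on $S$ merges everything. The expected number of such pairs of trees on a fixed $k$-set is $\bigl(k^{k-2}p_1^{k-1}\bigr)\bigl(k^{k-2}p_2^{k-1}\bigr)$, and summed over all $\binom nk$ choices of $S$ this is of order $(enkp_1p_2)^k/(p_1p_2)=(e c\,k/\log n)^k\cdot (n\log n)/c$, which for $k$ a small multiple of $\log n$ is huge. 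So the first moment is enormous; the work is a second-moment computation showing the number of internally spanned $k$-sets is concentrated, hence at least one exists with probability bounded below — this is the standard obstacle and the most delicate estimate, since pairs of $k$-sets sharing many vertices must be controlled. An alternative, which may be cleaner, is to reveal edges greedily: build a red tree and a blue tree simultaneously on a growing vertex set, at each step using fresh vertices and fresh randomness, so that the process stochastically dominates a supercritical branching-type process and survives to size $\asymp\log n$ with constant probability. Either way, the key inequality driving success is exactly $np_1p_2\log n=c$ with $c$ large: each new vertex has $\asymp np_i$ potential edges of colour $i$ to the current set, and the product of the two "at least one edge of each colour" probabilities beats the reciprocal of the number of vertices we need to absorb.

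\textbf{Growing the seed.} Given an internally spanned $A$ with $|A|=\ell$, and a fresh vertex $v\notin A$, the pair $A\cup\{v\}$ is internally spanned as soon as $v$ sends at least one red and at least one blue edge into $A$; this happens with probability roughly $(1-(1-p_1)^\ell)(1-(1-p_2)^\ell)\asymp (\ell p_1)(\ell p_2)=\ell^2 p_1 p_2=\ell^2 c/(n\log n)$. Summing the failure probabilities as $\ell$ runs from $\log n$ up to $(\log n)^{3/2}$ shows the total expected number of failed attempts over, say, $n/2$ candidate vertices is small enough — here it helps that $\ell\geq\log n$, so each step succeeds with probability at least $\asymp c(\log n)/n$ — and a Chernoff bound on the number of successes gives that we reach size $(\log n)^{3/2}$ with very high probability \emph{conditional on having a seed}. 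The only subtlety is bookkeeping the revealed edges so that Phase one and Phase two use disjoint edge-sets (or are handled by the exploration algorithm of Definition \ref{def:alg1}); the sprinkling convention already in place makes this routine.

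\textbf{Amplification.} The seed step only gives success probability $\Omega(1)$, not $1-e^{-\sqrt n}$. To boost it, partition $[n]$ (or a large chunk of it) into $\asymp n/\log^2 n$ disjoint blocks of size $\asymp\log^2 n$, attempt the seed construction independently within each block using disjoint edges, and note the attempts are genuinely independent. The probability that all $\asymp n/\log^2 n$ attempts fail is at most $(1-\Omega(1))^{n/\log^2 n}=e^{-\Omega(n/\log^2 n)}\leq e^{-\sqrt n}$ for large $n$. Once one block yields a seed, the growth phase (which had failure probability already $\leq e^{-\sqrt n}$, or can be made so by the same device) finishes the job. I expect the second-moment/branching estimate for the seed — showing an internally spanned set of size $\asymp\log n$ exists with constant probability inside a block of size $\asymp\log^2 n$ — to be the main obstacle, both because it is where the constant $c$ must be taken large and because the jigsaw dynamics (as opposed to mere connectivity of the two graphs) make the "internally spanned" event harder to get a clean handle on than in ordinary bootstrap percolation.
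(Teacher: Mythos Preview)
Your outline has the right shape (seed, growth, amplification), but the amplification step as written cannot work, and the gap is not a matter of missing details.

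You propose to partition $[n]$ into blocks of size about $(\log n)^2$ and run the seed construction independently within each block. But recall from \eqref{eq:conds2} that $p_1\leq n^{-1/2}$, so the expected number of red edges inside any such block is at most $\binom{(\log n)^2}{2}n^{-1/2}=o(1)$; with probability $1-o(1)$ the red graph restricted to a block is empty and the block contains no internally spanned set of size even $2$. Thus the event you single out as ``the main obstacle'' --- an internally spanned set of size $\asymp\log n$ inside a block of size $\asymp(\log n)^2$ --- is not merely delicate to prove but false: each attempt fails almost surely, and the product over $n/(\log n)^2$ blocks gives no amplification at all. Relatedly, your claim that a seed of size $\asymp\log n$ appears with probability $\Omega(1)$ is not what one gets from an honest exploration: growing one vertex at a time, step $t$ succeeds with probability roughly $1-\exp(-\Theta(np_1p_2 t))=1-\exp(-\Theta(ct/\log n))$, and the product over $t=1,\dots,\Theta(\log n)$ is $n^{-\Theta(1)/c}$, polynomially small.

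The paper's 1-by-1 algorithm resolves both issues at once by a device you did not isolate: at step $t$ one tests red edges only to the \emph{last} added vertex $x_k^t$, and then tests blue edges only from the resulting red neighbourhood $R_k^t$ to the whole trial set. This asymmetry means that when a round fails, the only vertices that must be permanently discarded (to preserve independence between rounds) are those in the trial set itself, at most $(\log n)^{3/2}$ of them; the vertices in the various $R_k^t$ are merely set aside for the current round and returned afterwards. Hence each round has access to essentially all of $[n]$, succeeds with probability $n^{-O(1)/c}$ (Lemmas \ref{lem:trial1} and \ref{lem:trial2}), and one can afford $n/\bigl(2(\log n)^{3/2}\bigr)$ independent rounds, yielding failure probability at most $\bigl(1-n^{-O(1)/c}\bigr)^{n/(2(\log n)^{3/2})}\leq e^{-\sqrt{n}}$. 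Your symmetric test (both colours against the whole current set) would work for the growth from $\log n$ to $(\log n)^{3/2}$, but for $t$ below $\log n$ it consumes $\Theta(n\log n/c)$ fresh vertices in expectation --- more than exist --- which is presumably why you reached for a global second-moment argument instead; but a global argument cannot be localized to small blocks, so you then have no independent repetitions with which to amplify.
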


We prove the lemma by repeatedly attempting to build an internally spanned set of size at least $(\log n)^{3/2}$ by adding one vertex at a time to a so-called `trial set' (we call this the \emph{1-by-1 algorithm}). If we find a suitable vertex to add to the trial set, then we continue. If not, then we discard the vertices from the trial set, and start again; we call this starting a new \emph{round}. Discarding the trial set will ensure independence between rounds (see below). More precisely, the algorithm performs a sequence of `tests', asking whether certain potential red edges or potential blue edges are present. (More precisely still, the `test' corresponding to a pair $\{x,y\}$ of vertices and $i\in \{1,2\}$ asks whether $xy\in E_i^{(1)}$.) We shall make sure that no test is performed twice.

The subtlety is in the order in which we reveal the edges: the aim is to reveal as few edges as possible in the search for each new vertex. Given an internally spanned trial set $X$, we first reveal all red edges from (not-yet-discarded) vertices outside $X$ to the \emph{most recently added} vertex $v$ in $X$. Since a potential red edge is tested immediately after the first time one of its ends is added to $X$, it cannot be tested twice within a round.\footnote{Looking at it from the point of view of vertices, rather than edges, the fact that a potential new vertex has been considered at the $t$th step, and found not to have a red edge to the most recently added vertex, does not stop us from testing the same vertex again at later steps, since in those steps we will be testing for different red edges.} Let $R$ be the set of vertices outside $X$ incident with such red edges. We test for blue edges, to the whole of the trial set, \emph{only} from vertices in $R$. If there is a vertex in $R$ with a blue edge to any vertex in the trial set, then we add one such vertex to the trial set. We discard all other vertices in $R$ until the end of this round; this ensures that no potential blue edge is tested twice within a round. At the end of a round we permanently discard all vertices in the trial set. Since a tested edge (red or blue) always has at least one end in the trial set, this ensures that no edge is tested in two different rounds, giving us the independence we need.

Here is a formal description of the algorithm.

\begin{definition}\label{def:alg1} {\bf (The 1-by-1 algorithm.)}
The algorithm is divided into \emph{rounds}, indexed by $k$, and each round is divided into \emph{steps}, indexed by $t$. At the start of the $k$th round there is a set $A_k\subset [n]$ of active vertices and a set $D_k\subset [n]$ of discarded vertices. We begin with $A_1=[n]$ and $D_1=\emptyset$. The procedure for the $k$th round is as follows:
\begin{enumerate}
\item At the start of the $t$th step of the $k$th round there is a set $X_k^t=\{x_k^1,\dots,x_k^t\}\subset A_k$ of trial vertices, a set $A_k^t\subset A_k$ of active vertices, and a set $D_k^t\subset A_k$ of discarded vertices. These sets partition $A_k$, so for all $t$, $A_k$ is the disjoint union of $X_k^t$, $A_k^t$ and $D_k^t$. To begin, we have $X_k^0=D_k^0=\emptyset$ and $A_k^0=A_k$.
\item For $t=0$, move an arbitrary active vertex to the trial set. That is, set $X_k^1=\{x_k^1\}$, $D_k^1=\emptyset$ and $A_k^1=A_k^0\setminus\{x_k^1\}$, where $x_k^1\in A_k^0$ is arbitrary.
\item For $t\geq 1$, reveal all edges of $G_1^{(1)}$ (that is, all red edges from the first sprinkling) between $A_k^t$ and $\{x_k^t\}$, and let
\[
R_k^t := \big\{ x\in A_k^t \,:\, xx_k^t\in E_1^{(1)} \big\}.
\]
Then, reveal all edges of $G_2^{(1)}$ (that is, all blue edges from the first sprinkling) between $R_k^t$ and $X_k^t$, and let 
\[
B_k^t := \big\{ x\in R_k^t \,:\, xx_k^s\in E_2^{(1)} \text{ for some } 1\leq s\leq t \big\}.
\]
\item If $B_k^t\neq\emptyset$, then let $x_k^{t+1}$ be an arbitrary element of $B_k^t$. Then set
\[
X_k^{t+1} := X_k^t \cup \{x_k^{t+1}\}, \quad A_k^{t+1} := A_k^t \setminus R_k^t, \quad \text{and} \quad D_k^{t+1} := D_k^t \cup R_k^t \setminus \{x_k^{t+1}\}.
\]
If $t\geq (\log n)^{3/2}$ then STOP, otherwise set $t:=t+1$ and go to step (3).
\item If $B_k^t=\emptyset$, then set
\[
A_{k+1} := A_k \setminus X_k \qquad \text{and} \qquad D_{k+1} := D_k \cup X_k.
\]
If
\[
k \geq \frac{n}{2(\log n)^{3/2}}
\]
then STOP, otherwise set $k:=k+1$ and $t:=0$, and go to step (1).
\end{enumerate}
\end{definition}

Before starting the analysis, let us note that since we consider at most $n/(2(\log n)^{3/2})$ rounds, and stop each with a trial set of size at most $(\log n)^{3/2}$, we start each round with
\begin{equation}\label{eq:Ak0}
|A_k^0| = |A_k| \geq n/2.
\end{equation}
Let $\B_k^t$ be the event that $X_k^{t+1}$ is (defined and) has size $t+1$ (if $t \geq 1$ then this is equivalent to the event that $B_k^t$ is (defined and) non-empty). We shall show that $\B_k^t$ is not too unlikely. For technical reasons, we also need to consider the event
\[
\S_k^t = \big\{ |R_k^s|\leq n^{3/4} \text{ for } s=1,2,\dots,t \big\}
\]
that within round $k$, we have not `used up' too many vertices by step $t$. (Here and in what follows we ignore rounding to integers in expressions such as $n^{3/4}$. This makes essentially no difference.) For $k\leq n/(2(\log n)^{3/2})$ and $t\geq 1$, let
\[
r_k^t:=\P\big( \B_k^t\cap \S_k^t \biggiven \B_k^{t-1} \cap \S_k^{t-1} \big),
\]
noting that $\S_k^0$ is the trivial event that always holds

We shall need two different estimates on $r_k^t$, according to whether $t$ is larger or smaller than $(\log n)/c$.

\begin{lemma}\label{lem:rkt}
Suppose that $k\leq n / \big( 2(\log n)^{3/2} \big)$ and $1\leq t\leq t_1=(\log n)^{3/2}$. Then
\[
r_k^t \geq \begin{cases}
1-\exp\big( - (n/5) p_1 p_2 t (1-p_2 t) \big) & \text{unconditionally,} \\
(1/10) n p_1 p_2 t (1-p_2 t) & \text{if } n p_1 p_2 t \leq 1.
\end{cases}
\]
\end{lemma}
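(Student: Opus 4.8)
The plan is to estimate $r_k^t = \P(\R_k^t \cap \S_k^t \mid \R_k^{t-1}\cap\S_k^{t-1})$ by conditioning on the history of round $k$ up through step $t-1$ and on everything in previous rounds, and then analysing the two fresh batches of tests performed at step $t$: first the red tests between $A_k^t$ and $\{x_k^t\}$, then the blue tests between $R_k^t$ and $X_k^t$. Because of the edge-revealing discipline built into the $1$-by-$1$ algorithm, none of these tests has been performed before, so conditionally they are independent fair coin flips with the appropriate probabilities $p_1$ and $p_2$. On the event $\R_{k}^{t-1}\cap\S_k^{t-1}$ we have $|X_k^t| = t$ and, using \eqref{eq:Ak0} together with $\S_k^{t-1}$ (which controls how many vertices have been moved into $D_k^s$ for $s<t$), we get $|A_k^t| \geq n/2 - \sum_{s<t}|R_k^s| \geq n/2 - (t-1)\cdot n/(4(t-1)) = n/4$; so at least $n/4$ vertices are available to receive red edges from $x_k^t$. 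This is the elementary bookkeeping that makes the conditional distribution of $|R_k^t|$ stochastically dominate $\bin(n/4, p_1)$, while the event $\S_k^t$ additionally requires $|R_k^t| \leq n/(4t)$.

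Next I would compute, conditionally on $R_k^t$, the probability that $B_k^t$ is non-empty: a vertex $x\in R_k^t$ lands in $B_k^t$ iff at least one of the $t$ freshly-tested blue edges $xx_k^s$ ($1\leq s\leq t$) is present, which happens with probability $1-(1-p_2)^t$. Since the vertices of $R_k^t$ are tested independently, $\P(B_k^t = \emptyset \mid R_k^t) = \big((1-p_2)^t\big)^{|R_k^t|}$, and hence
\[
\P(\R_k^t \mid R_k^t) = 1 - (1-p_2)^{t|R_k^t|} \geq 1 - \exp\big(-p_2 t |R_k^t|\big).
\]
For the \emph{unconditional} bound I would restrict to the (likely, given $\S_k^{t-1}$) event that $|R_k^t|$ is not much smaller than its mean $\tfrac{n}{4}p_1 \geq \tfrac{n}{5}p_1$, say by a Chernoff bound, intersect with $\{|R_k^t|\leq n/(4t)\}$ to get $\S_k^t$, and then feed the resulting lower bound on $|R_k^t|$ into the displayed inequality, using $1-(1-p_2)^t \geq p_2 t(1-p_2 t)$ (valid since $p_2 t \leq p_2 t_1 = o(1)$ by \eqref{eq:conds2}) to convert $\exp(-p_2 t|R_k^t|)$ into $\exp(-(n/5)p_1 p_2 t(1-p_2 t))$ after absorbing lower-order terms. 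Here one must be slightly careful that the Chernoff deviation term and the $\S_k^t$-truncation term are dominated by the main exponent; since $np_1 \geq c\log n$ is large, the batch $R_k^t$ is concentrated and this goes through.

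For the second case, $np_1 p_2 t \leq 1$, the exponent above is tiny and the $1-e^{-x}\approx x$ linearisation loses nothing, but now one cannot afford to discard the (non-negligible) event $R_k^t = \emptyset$. Instead I would lower-bound $r_k^t$ by the probability that \emph{exactly one} vertex enters $R_k^t$ and that that vertex has a blue edge into $X_k^t$, and that moreover $|R_k^t| = 1 \leq n/(4t)$ so that $\S_k^t$ holds (automatic once $t \leq n/4$). This gives $r_k^t \geq |A_k^t| p_1 (1-p_1)^{|A_k^t|-1}\cdot\big(1-(1-p_2)^t\big) \geq (n/4)p_1 \cdot e^{-O(np_1)} \cdot p_2 t(1-p_2 t)$; since in this regime $np_1 p_2 t \leq 1$ forces $np_1$ to be at most a mild multiple of $1/(p_2 t)$... — actually here one should note $np_1 \le \sqrt{n p_1 p_2 \cdot n/(p_2)}$ is \emph{not} small, so the $e^{-O(np_1)}$ factor is genuinely lossy and the clean constant $1/10$ requires instead summing over \emph{all} admissible values $|R_k^t| = j$ with $1\leq j\leq n/(4t)$: $r_k^t \geq \sum_j \binom{|A_k^t|}{j}p_1^j(1-p_1)^{|A_k^t|-j}(1-(1-p_2)^{tj})$, bound $1-(1-p_2)^{tj}\geq (1-o(1)) p_2 t j$ and recognise the resulting sum as $(1-o(1))p_2 t\,\E\big[\min(\bin(|A_k^t|,p_1),\,n/(4t))\big]$, which is $(1-o(1))p_2 t\,|A_k^t|p_1 \geq (1-o(1))(n/4)p_1 p_2 t$ because $\bin(|A_k^t|,p_1)$ has mean $|A_k^t|p_1 \le (n/2)p_1 \ll n/(4t)$ under $np_1p_2 t \le 1$ (so the truncation at $n/(4t)$ removes only a vanishing fraction of the expectation). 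Cleaning up the constants yields $r_k^t \geq (1/10)np_1p_2 t(1-p_2 t)$.

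The main obstacle is the second bound: keeping the constant honest while (a) not throwing away the event $R_k^t=\emptyset$, (b) still enforcing $\S_k^t$, and (c) controlling the truncation of the binomial mean at $n/(4t)$ — this is where the hypothesis $np_1p_2 t\leq 1$ is used to guarantee the truncation is harmless. The first bound, by contrast, is a routine Chernoff-plus-union-bound calculation once the deterministic lower bound $|A_k^t|\geq n/4$ has been extracted from $\S_k^{t-1}$ and \eqref{eq:Ak0}.
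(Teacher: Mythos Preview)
Your two-stage analysis (bound $|R_k^t|$ via Chernoff, then analyse $B_k^t$ conditionally on $R_k^t$) can be made to work, but the paper sidesteps it entirely by observing that for each $x\in A_k^t$ the event $\{x\in B_k^t\}$ has probability exactly $p_1\bigl(1-(1-p_2)^t\bigr)\geq p_1p_2t(1-p_2t)$, and that these events are independent across $x$. Hence $\P\bigl((\R_k^t)^\cc\mid\R_k^{t-1}\cap\S_k^{t-1}\bigr)\leq\bigl(1-p_1p_2t(1-p_2t)\bigr)^{n/4}\leq\exp\bigl(-(n/4)p_1p_2t(1-p_2t)\bigr)$ directly, with no concentration argument on $|R_k^t|$; subtracting the crude bound $\P\bigl((\S_k^t)^\cc\bigr)\leq e^{-\sqrt{n}}$ then gives the first case. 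More strikingly, the second case follows from the first in one line via $1-e^{-x}\geq x/2$ for $x\leq 1$, so your separate truncated-expectation argument is unnecessary. As written that argument is also slightly loose: the linearisation $1-(1-p_2)^{tj}\geq(1-o(1))p_2tj$ is not uniform in $j$, since under $np_1p_2t\leq 1$ the typical value $j\approx|A_k^t|p_1$ can make $p_2tj$ of order $1$ rather than $o(1)$; a constant-factor version of the inequality suffices for the $1/10$, but the whole detour is avoidable.
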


\begin{proof}
We condition on the outcome of the exploration so far, up to the start of step $t$ of round $k$. Note that this information determines whether $\B_k^{t-1}\cap \S_k^{t-1}$ holds; we may assume that it does.

Given the information revealed so far, the conditional distribution of $|R_k^t|$ is binomial $\bin(|A_k^t|,p_1)$. Since $|A_k^t|\leq n$, we have
\begin{multline*}
\P\big( (\S_k^t)^\cc \biggiven \B_k^{t-1}\cap \S_k^{t-1} \big) = \P\big( |R_k^t| > n^{3/4} \biggiven \B_k^{t-1}\cap \S_k^{t-1} \big) \\
\leq \binom{n}{n^{3/4}} p_1^{n^{3/4}} \leq \big(e n^{1/4} p_1)^{n^{3/4}} \leq e^{-\sqrt{n}},
\end{multline*}
say, where for the first inequality we have taken a union bound and for the final inequality we have used the bound $p_1\leq 1/\sqrt{n}$ (from \eqref{eq:conds2}).

Now, conditional on the exploration so far, for each $x\in A_k^t$ we have
\[
\P\big( x\in B_k^t \big) = p_1 (1-(1-p_2)^t) \geq p_1 (1-e^{-p_2 t}) \geq p_1 p_2 t (1-p_2 t),
\]
since $1 - e^{-x} \geq x(1 - x)$ for all $x \geq 0$. On the event $\B_k^{t-1}\cap \S_k^{t-1}$ we have
\[
|A_k^t| \geq |A_k^0| - tn^{3/4} \geq n/4,
\]
using \eqref{eq:Ak0}. Since $\B_k^t$ holds if and only if $B_k^t\ne\emptyset$, we thus have
\begin{align*}
\P\big( (\B_k^t)^\cc \;\big|\; \B_k^{t-1}\cap \S_k^{t-1} \big)
&\leq \big( 1-p_1 p_2 t (1-p_2 t) \big)^{n/4} \\
&\leq \exp\big( -(n/4) p_1 p_2 t (1-p_2 t) \big),
\end{align*}
so
\[ 
r_k^t \geq 1-  \exp\big( -(n/4) p_1 p_2 t (1-p_2 t) \big) - e^{-\sqrt{n}}.
\]
The first case of the lemma now follows from the bounds
\[
n p_1 p_2 t = \frac{ct}{\log n} = o\big(\sqrt{n}\big) \qquad \text{and} \qquad p_2t\leq \frac{(\log n)^{3/2}}{(\log n)^2} = o(1).
\]
The second case follows from the first and the inequality $1-e^{-x}\geq x/2$, valid for $x\leq1$.
\end{proof}

In the next two lemmas, we break down the 1-by-1 algorithm into two stages: first, in Lemma~\ref{lem:trial1}, we show that the probability the algorithm reaches step
\[
t_0:=(\log n)/c
\]
in a given round is at least $n^{-O(1)/c}$. Then, in Lemma~\ref{lem:trial2}, we show that the probability it reaches step
\[
t_1:=(\log n)^{3/2},
\]
given that it has reached step $t_0$, is also at least $n^{-O(1)/c}$.

\begin{lemma}\label{lem:trial1}
Suppose that $k\leq n / \big( 2(\log n)^{3/2} \big)$. Then
\[
\P\big( \B_k^{t_0}\cap \S_k^{t_0} \biggiven \B_k^0 \big) \geq n^{-4/c}.
\]
\end{lemma}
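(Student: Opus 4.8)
The plan is to bound $\P(\R_k^{t_0}\cap\S_k^{t_0}\given\R_k^0)$ below by a telescoping product of the one-step quantities $r_k^t$ from Lemma~\ref{lem:rkt}, and then estimate that product. Writing
\[
\P\big(\R_k^{t_0}\cap\S_k^{t_0}\biggiven\R_k^0\big)
=\prod_{t=1}^{t_0} r_k^t,
\]
(using that $\S_k^0$ always holds and $\R_k^0$ is just the event that the round starts, which is automatic), I want to show this product is at least $n^{-4/c}$. The key observation is that for every $t\leq t_0=(\log n)/c$ we have $np_1p_2t = ct/\log n \leq 1$, so the \emph{second} case of Lemma~\ref{lem:rkt} applies throughout this range and gives
\[
r_k^t \geq \tfrac{1}{10}\,np_1p_2t(1-p_2t) = \tfrac{1}{10}\cdot\frac{ct}{\log n}\cdot(1-p_2t).
\]
Since $t\leq t_0\leq(\log n)^{3/2}$ and $p_2\leq(\log n)^{-2}$ by \eqref{eq:conds2}, the factor $1-p_2t$ is $1-o(1)$, so it is harmless (it contributes at most a constant factor to the whole product, or can simply be bounded below by, say, $1/2$).

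So the task reduces to estimating $\prod_{t=1}^{t_0}\frac{ct}{10\log n}$ from below. Taking logarithms,
\[
\sum_{t=1}^{t_0}\log\!\Big(\frac{ct}{10\log n}\Big)
= t_0\log\!\Big(\frac{c}{10\log n}\Big) + \sum_{t=1}^{t_0}\log t.
\]
Using $\sum_{t=1}^{t_0}\log t = \log(t_0!) = t_0\log t_0 - t_0 + O(\log t_0)$ (Stirling) and $t_0=(\log n)/c$, the leading terms are
\[
t_0\log\!\Big(\frac{ct_0}{10\log n}\Big) - t_0 + O(\log t_0)
= \frac{\log n}{c}\Big(\log\tfrac{1}{10} - 1\Big) + O(\log\log n),
\]
since $ct_0/\log n = 1$ exactly. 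Thus the logarithm of the product is $-(1+\log 10)\frac{\log n}{c} + O(\log\log n) \geq -\frac{4\log n}{c}$ for $n$ large (as $1+\log 10 < 4$ and the error term is lower order, and we have slack to absorb the $1-p_2t$ factors), which is exactly the claimed bound $n^{-4/c}$.

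The main obstacle — really the only thing requiring care — is making the telescoping step rigorous: one must check that the conditioning is set up so that $r_k^t = \P(\R_k^t\cap\S_k^t\given\R_k^{t-1}\cap\S_k^{t-1})$ genuinely multiplies out to the stated conditional probability, i.e. that $\R_k^t\cap\S_k^t \subseteq \R_k^{t-1}\cap\S_k^{t-1}$ (so the events are nested and the chain rule applies cleanly). This is immediate from the definitions: if the round reaches step $t$ then it reached step $t-1$, and the bound $|R_k^s|\leq n/4s$ for all $s\leq t$ includes all $s\leq t-1$. Everything else is the elementary computation above, with the constant $4$ chosen generously so that the $O(\log\log n)$ error and the $(1-p_2t)$ corrections cause no trouble.
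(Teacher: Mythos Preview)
Your proof is correct and follows essentially the same route as the paper: telescope into the product $\prod_{t=1}^{t_0} r_k^t$, apply the second case of Lemma~\ref{lem:rkt} throughout (since $np_1p_2t\leq 1$ for $t\leq t_0$), and use Stirling together with $ct_0/\log n=1$ and $\log(10e)<4$ to obtain the bound $n^{-4/c}$. One small slip: in verifying the nesting you write the condition in $\S_k^t$ as $|R_k^s|\leq n/4s$, whereas the paper's definition is $|R_k^s|\leq n/4t$ (same $t$ for every $s$); the nesting $\S_k^t\subseteq\S_k^{t-1}$ still holds, since $n/4t\leq n/4(t-1)$, so your conclusion is unaffected.
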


\begin{proof}
The definition of $t_0$ combined with the expression for $p_1 p_2$ from~\eqref{eq:conds} implies that $np_1 p_2 t_0 \leq 1$, and hence the second case of Lemma~\ref{lem:rkt} applies for the whole range. Thus,
\[
\P\big( \B_k^{t_0}\cap \S_k^{t_0} \biggiven \B_k^0 \big) \geq \prod_{t=1}^{t_0} \frac{1}{10} n p_1 p_2 t (1-p_2 t) \geq \left(\frac{c}{10\log n}\right)^{t_0} t_0! (1-p_2 t_0)^{t_0}.
\]
Recall that $p_2\leq (\log n)^{-2}$, so $p_2t_0\leq (\log n)^{-1}=o(1)$. Noting that $1-x\geq e^{-2x}$ if $x\leq 1/2$ and that $t!\geq (t/e)^t$ for all $t\in\N$, it follows that
\[
\P\big( \B_k^{t_0}\cap \S_k^{t_0} \biggiven \B_k^0 \big) \geq \left(\frac{ct_0}{10e\log n}\right)^{t_0} \exp\big( -2p_2 t_0^2 \big) =\exp\big( -t_0\log(10e) - 2p_2 t_0^2 \big).
\]
Since $p_2 t_0 \leq 1/\log n$, we obtain
\[
\P\big( \B_k^{t_0}\cap \S_k^{t_0} \biggiven \B_k^0 \big) \geq \exp( -4t_0 ) = n^{-4/c},
\]
as required.
\end{proof}

\begin{lemma}\label{lem:trial2}
Suppose that $k\leq n / \big( 2(\log n)^{3/2} \big)$. Then
\[
\P\big( \B_k^{t_1} \biggiven \B_k^{t_0}\cap \S_k^{t_0} \big) \geq n^{-O(1)/c}.
\]
\end{lemma}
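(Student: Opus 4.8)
The plan is to iterate the one-step bound from Lemma~\ref{lem:rkt} over the whole range $t_0<t\leq t_1$. For $t\geq 1$ the events $\R_k^t\cap\S_k^t$ are nested and decreasing in $t$, so the chain rule gives
\[
\P\big(\R_k^{t_1}\cap\S_k^{t_1}\biggiven\R_k^{t_0}\cap\S_k^{t_0}\big)=\prod_{t=t_0+1}^{t_1}r_k^t,
\]
and discarding $\S_k^{t_1}$ from the left-hand event only decreases the probability. Hence it is enough to prove $\prod_{t=t_0+1}^{t_1}r_k^t\geq n^{-O(1)/c}$, equivalently $\sum_{t=t_0+1}^{t_1}\log(1/r_k^t)=O\big((\log n)/c\big)$. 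I would use only the first (unconditional) case of Lemma~\ref{lem:rkt}; together with $np_1p_2=c/\log n$ and $p_2t\leq p_2t_1=o(1)$ (from \eqref{eq:conds} and \eqref{eq:conds2}) it gives $r_k^t\geq 1-a_t$, where $a_t:=\exp\big(-ct/(10\log n)\big)$, for all $1\leq t\leq t_1$.

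The decisive feature is that $a_t$ decays geometrically on the scale $t_0=(\log n)/c$: at $t=t_0$ it equals $e^{-1/10}$, an absolute constant of order $1$, whereas once $t$ is a large multiple of $t_0$ it is tiny. So I would split the product at $T:=10t_0=10(\log n)/c$ (which is well below $t_1=(\log n)^{3/2}$ for large $n$). For $t_0<t\leq T$ the crude bound $r_k^t\geq 1-a_{t_0}=1-e^{-1/10}$, a fixed positive constant, suffices: since there are at most $T=10(\log n)/c$ such factors, their product is at least $(1-e^{-1/10})^{10(\log n)/c}=n^{-O(1)/c}$. For $T<t\leq t_1$ one has $a_t\leq a_T=e^{-1}<1/2$, so $\log(1/r_k^t)\leq-\log(1-a_t)\leq 2a_t$; and $\sum_{t>T}a_t$ is at most a geometric series with ratio $e^{-c/(10\log n)}$ and first term at most $e^{-1}$, hence is $O\big(1/(1-e^{-c/(10\log n)})\big)=O\big((\log n)/c\big)$, using $1-e^{-x}\geq x/2$ for small $x\geq 0$. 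So the second partial product is also $n^{-O(1)/c}$, and multiplying the two ranges completes the proof.

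The one point requiring care --- and the reason for splitting the range --- is a counting issue: there are about $(\log n)^{3/2}$ factors in total, far more than $(\log n)/c$, so a uniform ``constant-less-than-one'' lower bound on each factor would only give $n^{-\Omega(\sqrt{\log n})}$, which is far too weak. The saving grace is that all but an initial segment of $O(t_0)$ factors are $1-o(1)$ with a summably small error, so only that initial segment --- where the bare constant-probability estimate is already enough --- contributes at the $n^{-O(1)/c}$ scale. Beyond organising these two regimes there is no real obstacle: the argument rests entirely on Lemma~\ref{lem:rkt} and routine bounds for geometric sums.
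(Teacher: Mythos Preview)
Your argument is correct and follows essentially the same approach as the paper: both apply the first case of Lemma~\ref{lem:rkt} to get $r_k^t\geq 1-\exp(-\Theta(ct/\log n))$ and then bound the resulting geometric sum by $O((\log n)/c)$. The paper avoids your two-range split by using the single inequality $1-x\geq e^{-3x}$ valid for $0\leq x\leq 0.9$ (which covers $x=e^{-1/6}$ at $t=t_0$), but otherwise the proofs are the same.
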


\begin{proof}
For $t_0<t\leq t_1$, we use the first of the two estimates in Lemma~\ref{lem:rkt}, the validity of which does not depend on $t$. Using this, and recalling that $p_2 \leq 1/(\log n)^2$ (from \eqref{eq:conds2}) and $t\leq (\log n)^{3/2}$, we have
\begin{align*}
\P\big( \B_k^{t_1} \biggiven \B_k^{t_0}\cap \S_k^{t_0} \big)
&\geq \prod_{t=t_0+1}^{t_1} \Big( 1-\exp\big( - (n/5) p_1 p_2 t (1-p_2 t) \big) \Big) \\
&\geq \prod_{t=t_0}^{t_1} \Big( 1-\exp\big( - (n/6) p_1 p_2 t \big) \Big) \\
&\geq \prod_{t=t_0}^{t_1} \Big( 1-\exp\big( - c t/(6\log n) \big) \Big) \\
&\geq \exp \bigg( - 3 \sum_{t=t_0}^{t_1} \exp\big( - c t / (6 \log n) \big) \bigg),
\end{align*}
where for the final step we used the inequality $1-x\geq \exp(-3x)$, valid (by convexity) for $0\leq x\leq 0.9$, say.\footnote{What we really need in this argument is that $p_2 \ll 1/\log n$, or equivalently $p_1 \gg 1/n$, which already implies the existence of a giant component in each colour. We only need the connectivity condition once we have obtained an internally spanned set of linear size.} Thus,
\[
 \P\big( \B_k^{t_1}  \biggiven \B_k^{t_0}\cap \S_k^{t_0} \big) \geq \exp\left( - \frac{3e^{-1/6}}{1-e^{-c/(6\log n)}} \right) \geq n^{-O(1)/c},
\]
where the implied constant does not depend on $c$.
\end{proof}

We now put the previous few lemmas together.

\begin{proof}[Proof of Lemma~\ref{lem:part1}]
Let $k\leq n/\big(2(\log n)^{3/2}\big)$. Then in the $k$th round, the probability of finding an internally spanned set of size $(\log n)^{3/2}$ is at least $n^{-O(1)/c}$, by applying Lemmas~\ref{lem:trial1} and~\ref{lem:trial2} in turn. Moreover, this bound holds conditional on the result of all previous bounds, since in proving Lemma~\ref{lem:trial1} and~\ref{lem:trial2} we conditioned on these previous rounds. Hence the probability that all $n/\big(2(\log n)^{3/2}\big)$ rounds terminate `early' (without finding an internally
spanned set of size $(\log n)^{3/2}$) is at most
\[
\Big( 1-n^{-O(1)/c} \Big)^{n/(2(\log n)^{3/2})} \leq \exp\left( -\Omega\left(\frac{n^{1-O(1)/c}}{(\log n)^{3/2}}\right)\right) \leq \exp\big( -\sqrt{n} \big),
\]
if $c$ is sufficiently large.
\end{proof}

\subsection{Part II}

In this part of the proof we prove the following lemma.

\begin{lemma}\label{lem:part2}
Given that $G^{(1)}$ contains an internally spanned set of size at least $(\log n)^{3/2}$, the conditional probability that $G^{(1)}\cup G^{(2)}$ contains an internally spanned set of size at least $n/16$ is at least $1-n^{-100}$. That is,
\[
\P \Big( I\big( G^{(1)}\cup G^{(2)} , n/16 \big) \Biggiven I\big( G^{(1)}, (\log n)^{3/2} \big) \Big) \geq 1-n^{-100}.
\]
\end{lemma}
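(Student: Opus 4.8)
The plan is to grow the internally spanned set of size $(\log n)^{3/2}$ given by Lemma~\ref{lem:part1} up to linear size by a doubling argument, using the fresh edges $E_i^{(2)}$. Suppose we currently have an internally spanned set $U$ with $|U| = m$, where $(\log n)^{3/2} \leq m \leq n/16$. I would run an exploration, analogous to the 1-by-1 algorithm of Definition~\ref{def:alg1} but now absorbing vertices into a \emph{single} growing cluster rather than into a small trial set: reveal the $G_1^{(2)}$-edges from the current set $U$ to the outside, let $R$ be the set of outside vertices with a red edge to $U$, then reveal the $G_2^{(2)}$-edges from $R$ to $U$, and move into $U$ every vertex of $R$ that has a blue edge to $U$. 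Each such vertex merges with the cluster $U$ in the jigsaw process (within $G^{(1)} \cup G^{(2)}$ restricted to the new vertex set), so the enlarged set is again internally spanned. The key point is that, since $U$ is large, each outside vertex is absorbed independently with probability $q_m := (1-(1-p_1)^m)(1-(1-p_2)^m)$, so the number of vertices absorbed in one such sweep stochastically dominates a binomial $\mathrm{Bin}(n - |U| - (\text{discarded}), q_m)$.

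The heart of the estimate is then to show $n q_m \geq 2m$ (say), so that one sweep at least doubles the cluster with overwhelming probability. Since $p_1 \leq p_2$ and $1 - (1-p)^m \geq 1 - e^{-pm} \geq \tfrac12 \min\{1, pm\}$, we get $q_m \geq \tfrac14 \min\{1,p_1 m\}\min\{1,p_2 m\}$. When $m$ is at the small end, $m \asymp (\log n)^{3/2}$, we have $p_1 m \geq (c \log n / n)(\log n)^{3/2}$ which is tiny, so $q_m \asymp p_1 p_2 m^2 = c m^2/(n\log n)$ and hence $n q_m \asymp c m^2/\log n \gg m$ because $m \gg \log n$; this is where the $(\log n)^{3/2}$ (rather than $\log n$) from Part~I is used. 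As $m$ grows the bound only improves: once $p_1 m \geq 1$ we have $q_m \geq \tfrac14$, giving $n q_m \geq n/4 \gg m$ for $m \leq n/16$. So in every regime one sweep multiplies the size by at least a constant factor bigger than $1$, except I should be slightly careful and phrase it as: the cluster size grows geometrically, reaching $n/16$ in $O(\log n)$ sweeps.

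For the probability bound, after conditioning on the current cluster size $|U| = m$ I would bound the chance that a single sweep \emph{fails} to reach $\min\{2m, n/16\}$ by a Chernoff bound on $\mathrm{Bin}(\Theta(n), q_m)$ with mean $\geq 2m \geq 2(\log n)^{3/2}$; this failure probability is at most $\exp(-\Omega((\log n)^{3/2})) \leq n^{-200}$, say. I also need to control the discarded vertices — those in $R$ without a blue edge to $U$ — so that $|A| = n - |U| - |D|$ stays $\geq n/2$ throughout; since there are only $O(\log n)$ sweeps and in each sweep $|R| \leq $ (a binomial with mean $n p_1 \leq \sqrt n$ when $m$ is small, and in any case $|R| \leq n$), a crude union bound shows $|D| = o(n)$ with probability $1 - n^{-\omega(1)}$, exactly as in the proof of Lemma~\ref{lem:rkt} where $\S_k^t$ was controlled. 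Taking a union bound over the $O(\log n)$ sweeps gives the claimed $1 - n^{-100}$ (with room to spare). Independence from Part~I is automatic because we only ever reveal edges of $G^{(2)}$ here, and the starting set $U$ is measurable with respect to $G^{(1)}$.

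The main obstacle is the bookkeeping to guarantee that no edge of $G^{(2)}$ is tested twice across the $O(\log n)$ sweeps and that the conditioning is clean: as in Definition~\ref{def:alg1}, a red edge to $U$ should be tested exactly once (when its outside endpoint is first examined against the current cluster), which forces the same "reveal red to the newly added vertices only, then blue from the resulting candidates to all of $U$" discipline, and vertices that are examined but not absorbed must be discarded for the remainder of the sweep (though they may be reconsidered in the next sweep, since then we test their red edges to the \emph{newly} added vertices). Getting the exploration algorithm stated precisely enough that the one-sweep binomial lower bound is literally correct — rather than just morally correct — is the only real work; the probabilistic estimates themselves are routine Chernoff bounds once the independence is nailed down.
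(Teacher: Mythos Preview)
Your overall strategy---grow the internally spanned set geometrically using only the fresh $G^{(2)}$ edges, checking at each stage that the expected number of new vertices exceeds twice the current size and applying Chernoff---is exactly what the paper does, and your calculation that $nq_m\gg m$ in all three regimes is correct.

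The gap is in the exploration discipline you settle on in your final paragraph. You propose testing red edges only to the newly added part of $U$ but blue edges from the red-neighbours $R$ to \emph{all} of $U$, discarding unabsorbed members of $R$ ``for the remainder of the sweep (though they may be reconsidered in the next sweep)''. This does not avoid re-testing: if $v\in R$ at sweep $t$ has no blue edge to $U_t$ and is reconsidered at sweep $t+1$, you would again test its blue edges to $U_{t+1}\supset U_t$, so the blue edges from $v$ to $U_t$ are examined twice and the binomial lower bound is no longer literally valid. Discarding such vertices \emph{permanently} is not an easy fix either: in the symmetric case $p_1=p_2$, the total number of permanently discarded vertices over all sweeps is of order $n$ (the dominant contribution coming from the sweep where $p_1m\approx 1$), so you could not maintain $|A|\geq n/2$.

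The paper resolves this very cleanly: at step $t$ it tests \emph{both} colours only between the active set and the newly added layer $X_t\setminus X_{t-1}$, and adds those active vertices with a red \emph{and} a blue edge into that layer. No vertex is ever discarded; an unabsorbed active vertex simply remains active, and at the next step only its edges to the \emph{next} new layer are examined, so no edge of either colour is ever tested twice. The per-step absorption probability becomes $q_{t,1}q_{t,2}$ with $q_{t,i}=1-(1-p_i)^{|X_t\setminus X_{t-1}|}$, and the same three-regime check you carried out (with $m$ replaced by $m/2$) shows the mean is at least $2x_t$; in fact one gets mean $\geq \Omega(c(\log n)^2)$ uniformly, giving failure probability $\exp(-\Omega(c(\log n)^2))$ per step and hence the claimed $1-n^{-100}$ after a union bound over $O(\log n)$ steps.
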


In this range we use a different vertex exploration algorithm in order to find successively larger internally spanned sets. Rather than adding vertices 1-by-1, as in Part I, we attempt to double the size of the trial set at each step. We start with a set $X_0$ of size $t_1=(\log n)^{3/2}$ internally spanned by $G^{(1)}$ (found in Part I), and we continue so that at step $t$ we have a set $X_t$ of size
\[
 x_t := 2^t t_1
\]
internally spanned by $G^{(1)}\cup G^{(2)}$. In order to maintain independence between steps, we only add a new vertex $v$ to the trial set if there is at least one edge of each colour from $G^{(2)}$ joining $v$ to the subset of the trial vertices that was added at the previous step. 

\begin{definition}\label{def:alg2} {\bf (The doubling algorithm.)}
At the start of the $t$th step there is a set $X_t$ of vertices internally spanned by $G^{(1)}\cup G^{(2)}$, where $|X_t| = x_t$. The set $X_t$ is the \emph{trial set}. The set $A_t:=V(G)\setminus X_t$ of remaining vertices in the graph is the \emph{active set}. The algorithm takes as its inputs the double graphs $G^{(1)}$ and $G^{(2)}$, and a set $X_0$ of size $(\log n)^{3/2}$, internally spanned by $G^{(1)}$.
\begin{enumerate}
\item At step $t\geq 0$, reveal all edges of $G^{(2)}$ between $A_t$ and $X_t\setminus X_{t-1}$, where we set
$X_{-1}=\emptyset$. Let
\[
B_t := \bigcup_{v',v''\in X_t\setminus X_{t-1}} \Big\{ v\in A_t \,:\, vv'\in E_1^{(2)} \text{ and } vv'' \in E_2^{(2)} \Big\}.
\]
Thus, $B_t$ is the set of active vertices joined to $X_t\setminus X_{t-1}$ by an edge of each colour from the second sprinkling.
\item If $|B_t|\leq x_t$ then STOP. Otherwise, let $C_t\subset B_t$ be an arbitrary set of exactly $x_t$ vertices of $B_t$, and set
\[
X_{t+1} := X_t \cup C_t, \qquad \text{and} \qquad A_{t+1} := A_t \setminus C_t.
\]
If $|X_{t+1}|\geq n/16$ then STOP, otherwise go to step (1).
\end{enumerate}
\end{definition}

First we need a lower bound on the probability that the size of the trial set doubles at step $t$.

\begin{lemma}\label{lem:double}
The probability that $|B_t|\geq x_t$, conditional on the doubling algorithm having reached the $t$th step (that is, $|X_t| = x_t$), is at least
\[
1- \exp\Big( -\Omega\big( c(\log n)^2 \big) \Big).
\]
\end{lemma}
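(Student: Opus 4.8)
The plan is to lower-bound the probability that $|B_t| \geq x_t$ by a routine second-moment / Chernoff argument on the size of $B_t$. Condition on the exploration so far, so that $X_t$ and $A_t$ are determined; by construction $|X_t \setminus X_{t-1}| = x_{t-1} = 2^{t-1}t_1$ (or $|X_0| = t_1$ when $t = 0$, which only helps), and $|A_t| = n - x_t \geq n - n/16 \geq n/2$ since the algorithm has not yet stopped. Crucially, the edges of $G^{(2)}$ between $A_t$ and $X_t \setminus X_{t-1}$ have not been revealed yet — this is exactly the point of only using the previous block of trial vertices — so, conditionally, the events $\{v \in B_t\}$ for distinct $v \in A_t$ are independent, each occurring with probability
\[
q_t := \big(1 - (1-p_1)^{x_{t-1}}\big)\big(1 - (1-p_2)^{x_{t-1}}\big).
\]
Thus $|B_t|$ is conditionally binomial $\bin(|A_t|, q_t)$ with $|A_t| \geq n/2$.

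Next I would show $|A_t| q_t$ is comfortably larger than $x_t$, in fact by a factor growing like $c\log n$, so that the one-sided Chernoff bound $\P(\bin(N,q) \leq Nq/2) \leq \exp(-Nq/8)$ gives the claimed $1 - \exp(-\Omega(c(\log n)^2))$. To estimate $q_t$: since $p_1 \leq p_2$ and (from \eqref{eq:conds2}) $p_1 \leq n^{-1/2}$, $p_2 \leq (\log n)^{-2}$, and since $x_{t-1} \geq t_1/2 = (\log n)^{3/2}/2$ while the algorithm runs only while $x_t \leq n/16$, both exponents $p_i x_{t-1}$ are $o(1)$ (indeed $p_2 x_{t-1} \leq p_2 \cdot n/16 = o(1)$ by \eqref{eq:conds2}), so $1 - (1-p_i)^{x_{t-1}} = (1+o(1)) p_i x_{t-1}$, giving $q_t = (1+o(1)) p_1 p_2 x_{t-1}^2$. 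Hence
\[
|A_t| q_t \geq (n/2)(1+o(1)) p_1 p_2 x_{t-1}^2 = (1+o(1)) \frac{n p_1 p_2}{2} \cdot \frac{x_t^2}{4} = (1+o(1)) \frac{c}{8\log n} \cdot \frac{x_t^2}{4},
\]
using $np_1p_2 = c/\log n$ from \eqref{eq:conds}. Therefore $|A_t| q_t / x_t = (1+o(1)) \frac{c\, x_t}{32 \log n} \geq (1+o(1)) \frac{c\, t_1}{32\log n} = \Omega(c (\log n)^{1/2})$, which in particular is $\geq 2$ for $c$ large, so $x_t \leq |A_t|q_t/2$ and the Chernoff bound yields
\[
\P\big(|B_t| < x_t\big) \leq \P\big(|B_t| \leq |A_t|q_t/2\big) \leq \exp\big(-|A_t| q_t / 8\big) \leq \exp\Big(-\Omega\big(c x_t^2/\log n\big)\Big) \leq \exp\Big(-\Omega\big(c(\log n)^2\big)\Big),
\]
the last step using $x_t \geq x_0 = t_1 = (\log n)^{3/2}$, so $x_t^2/\log n \geq (\log n)^2$.

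The main obstacle — really the only subtlety — is making sure the conditional independence claim is airtight: I must check that the edges probed in step~(1) at time $t$ (those between $A_t$ and $X_t \setminus X_{t-1}$) have genuinely never been examined before. This follows because at any earlier step $s < t$ the only $G^{(2)}$-edges revealed had an endpoint in $X_s \setminus X_{s-1} \subseteq X_{t-1}$, which is disjoint from $X_t \setminus X_{t-1}$; and edges within $X_t$ or touching already-discarded vertices play no role. Once that is in place, everything else is the elementary binomial estimate above. (Note also that the bound in the lemma is stated uniformly in $t$, and indeed our estimate is weakest at $t = 0$, where $x_t$ is smallest — but even there $x_0^2/\log n = (\log n)^2$, so the uniform bound holds.)
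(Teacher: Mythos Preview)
Your overall strategy matches the paper's exactly: condition on the exploration so far, observe that $|B_t|$ is (stochastically at least) binomial with parameters $(|A_t|,q_t)$ where $|A_t|\geq n/2$, show the mean comfortably exceeds $x_t$, and apply a Chernoff bound. Your conditional-independence check is correct. However, there is a genuine error in your estimate of $q_t$. You assert that ``both exponents $p_i x_{t-1}$ are $o(1)$ (indeed $p_2 x_{t-1} \leq p_2 \cdot n/16 = o(1)$ by \eqref{eq:conds2})'', but this is false: \eqref{eq:conds2} gives only $p_2 \leq (\log n)^{-2}$, so $p_2 \cdot n/16 \leq n/(16(\log n)^2)$, which tends to infinity. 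Likewise $p_1 x_{t-1}$ can be as large as $n^{1/2}/32$. Hence for large $t$ your linearization $1-(1-p_i)^{x_{t-1}} = (1+o(1))p_i x_{t-1}$ breaks down, and your formula $q_t = (1+o(1))p_1 p_2 x_{t-1}^2$ then \emph{overestimates} $q_t$, invalidating the lower bound $|A_t|q_t \geq \Omega(c x_t^2/\log n)$ you need.

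The repair is a short case analysis, which is exactly what the paper does: use $1-(1-p_i)^{x_{t-1}} \geq p_i x_{t-1}/4$ when $p_i x_{t-1}<2$ and $\geq 1/2$ otherwise. One then verifies $\E|B_t|\geq 2x_t$ separately in the three regimes (both $p_i x_{t-1}$ small; $p_1 x_{t-1}$ small but $p_2 x_{t-1}$ large; both large), invoking respectively $n p_1 p_2 x_{t-1} \geq c(\log n)^{1/2}$, $n p_1 \geq c\log n$, and $x_t\leq n/16$. After that, Chernoff gives $1-\exp(-\Omega(\E|B_t|))$, and monotonicity of $q_t$ in $t$ reduces to the case $t=0$, where indeed your computation is valid and yields $\E|B_t|\geq \Omega(c(\log n)^2)$.
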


\begin{proof}
Let $s_t$ be the probability in question, so
\[
s_t = \P\big( |B_t|\geq x_t \biggiven |X_t| = x_t \big).
\]
Let $q_{t,i}$ be the probability that a vertex $v\in A_t$ is joined to $X_t \setminus X_{t-1} = C_{t-1}$ by at least one edge of $G_i^{(2)}$. Since $|C_{t-1}|=x_t/2$ for $t\geq 1$ while $|C_{-1}|=x_0$, we have
\begin{equation}\label{eq:qit}
q_{t,i} \geq 1-(1-p_i)^{x_t / 2} \geq 1-e^{-p_i x_t / 2} \geq \begin{cases} p_i x_t / 4 & \text{if } p_ix_t<2, \\ 1/2 & \text{otherwise.} \end{cases}
\end{equation}
By the definition of the doubling algorithm, we have $|X_t|\leq n/16$ (otherwise we would have stopped), so there are at least $15n/16\geq n/2$ vertices $v\in A_t$. The events that individual vertices are in $B_t$ are independent (because we do not `re-test' edges). Hence if $Z$ is a random variable with the $\bin\big(n/2,q_{t,1} q_{t,2}\big)$ distribution, then $s_t \geq \P( Z \geq x_t )$.

From \eqref{eq:qit} it is easy to check that
\[
\E[Z]= (n/2) q_{t,1} q_{t,2} \geq 2 x_t.
\]
Indeed, if $p_1x_t\leq p_2x_t<2$, then $\E[Z] \geq (n/32)p_1p_2x_t^2 \geq 2x_t$ since $np_1p_2\geq c/(\log n)$ (from \eqref{eq:conds}) and $x_t\geq (\log n)^{3/2}$. If $p_1x_t<2\leq p_2x_t$, then $\E[Z]\geq np_1x_t/16\geq 2x_t$, using $np_1\geq c\log n$. Finally, if $2<p_1x_t\leq p_2x_t$, then $\E[Z]\geq n/8\geq 2x_t$ since $x_t\leq n/16$. We thus have
\[
 s_t \geq \P(Z\geq x_t) \geq \P\big(Z\geq \E[Z]/2\big) \geq 1-\exp\big(-\Omega(\E[Z])\big),
\]
using a standard (Chernoff-type) bound for the final step. Now $q_{t,i}$ is increasing in $t$, so
\[
 \E[Z] = nq_{t,1}q_{t,2}/2 \geq n q_{0,1}q_{0,2}/2 \geq c(\log n)^2,
\]
where the last inequality follows from \eqref{eq:qit}, recalling that $x_0=t_1=(\log n)^{3/2}$
and that $p_1\leq p_2\leq (\log n)^{-2}$ (see \eqref{eq:conds2}).
This completes the proof.
\end{proof}

Lemma~\ref{lem:part2} is now immediate.

\begin{proof}[Proof of Lemma~\ref{lem:part2}]
Recall that we wish to show that
\begin{equation}\label{eq:part2}
\P \Big( I\big(  G^{(1)}\cup G^{(2)} , n/16 \big) \Biggiven I\big( G^{(1)} , (\log n)^{3/2} \big) \Big) \geq 1-n^{-100}.
\end{equation}
Let $t_2$ be maximal such that $x_{t_2} \leq n/16$, which in particular implies that $t_2=O(\log n)$. By Lemma~\ref{lem:double}, the left-hand side of~\eqref{eq:part2} is at least
\begin{align*}
\prod_{t=0}^{t_2} \P\big( |B_t|\geq x_t \biggiven |X_t| = x_t \big)
&\geq \Big(1-\exp\big(- \Omega(c(\log n)^2) \big) \Big)^{O(\log n)} \\
&\geq 1-\exp\big(- \Omega(c(\log n)^2) \big),
\end{align*}
which is certainly at least $1-n^{-100}$, as required.
\end{proof}

\subsection{Part III}

It remains to show that if $G^{(1)}\cup G^{(2)}$ contains an internally spanned set $X$ of size at least $n/16$ then $G=G^{(1)}\cup G^{(2)}\cup G^{(3)}$ is internally spanned with high probability. But this is trivial: using the final sprinkle, i.e., the edges of $G^{(3)}$, every vertex $v\in[n]\setminus X$ is joined to $X$ by both a red edge and a blue edge, with high probability.

\begin{proof}[Proof of Theorem~\ref{thm:pc}]
As noted above, it remains only to prove (ii), and in doing so, we may assume \eqref{eq:conds}. By Lemmas~\ref{lem:part1} (Part I) and~\ref{lem:part2} (Part II), $G^{(1)}\cup G^{(2)}$ contains an internally spanned set of size at least $n/16$ with probability at least $1-2n^{-100}$. Let $X$ be such a set. Then the probability that there is any vertex of $[n]\setminus X$ not joined to $X$ by at least one edge of $G_1^{(3)}$ (a red edge from the third sprinkling) and at least one edge of $G_2^{(3)}$ (a blue edge from the third sprinkling) is at most
\[
2n(1-p_1)^{n/16} = o(1),
\]
since $p_1 \geq c(\log n)/ n$ and $c$ is sufficiently large.
\end{proof}

\bibliographystyle{amsplain}
\bibliography{jigsaw,bprefs}

\providecommand{\bysame}{\leavevmode\hbox to3em{\hrulefill}\thinspace}
\providecommand{\MR}{\relax\ifhmode\unskip\space\fi MR }
\providecommand{\MRhref}[2]{%
  \href{http://www.ams.org/mathscinet-getitem?mr=#1}{#2}
}
\providecommand{\href}[2]{#2}
\begin{thebibliography}{1}

\bibitem{AL}
M.~Aizenman and J.L. Lebowitz, \emph{Metastability effects in bootstrap
  percolation}, J. Phys. A \textbf{21} (1988), no.~19, 3801--3813.

\bibitem{BCDS}
C.D. Brummitt, S.~Chatterjee, P.S. Dey, and D.~Sivakoff, \emph{Jigsaw
  percolation: {W}hat social networks can collaboratively solve a puzzle?},
  Ann. Appl. Probab. \textbf{25} (2015), no.~4, 2013--2038.

\bibitem{BPRSH}
S.V. Buldyrev, R.~Parshani, G.~Paul, H.E. Stanley, and S.~Havlin,
  \emph{Catastrophic cascade of failures in interdependent networks}, Nature
  \textbf{464} (2010), 1025--1028.

\bibitem{CC}
R.~Cerf and E.N.M. Cirillo, \emph{Finite size scaling in three-dimensional
  bootstrap percolation}, Ann. Probab. \textbf{27} (1999), no.~4, 1837--1850.

\bibitem{GS}
J.~Gravner and D.~Sivakoff, \emph{Nucleation scaling in jigsaw percolation},
  Ann. Appl. Probab. \textbf{27} (2017), no.~1, 395--438.

\bibitem{Hol}
A.E. Holroyd, \emph{Sharp metastability threshold for two-dimensional bootstrap
  percolation}, Probab. Theory Related Fields \textbf{125} (2003), no.~2,
  195--224.

\end{thebibliography}

\end{document}